\documentclass[10pt,a4paper,twoside]{article}

\usepackage{amsmath,amsfonts,amssymb,latexsym}
\usepackage{amsthm}
\usepackage{verbatim,soul}
\usepackage[english]{babel}
\usepackage{a4wide}
\usepackage{pstricks,graphicx}
\usepackage[small,euler-digits,icomma,OT1,T1]{eulervm}
\usepackage{hyperref}

\theoremstyle{plain}
\newtheorem{theorem}{Theorem}[section]
\newtheorem{proposition}{Proposition}[section]
\newtheorem{lemma}{Lemma}[section]
\newtheorem{corollary}{Corollary}[section]
\theoremstyle{definition}
\newtheorem{definition}{Definition}[section]
\theoremstyle{remark}
\newtheorem{remark}{Remark}[section]

\newcommand{\verteq}{\rotatebox{90}{$\,=$}}

\title{Strong existence and uniqueness of the stationary distribution for a stochastic inviscid dyadic model}

\author{Luisa Andreis\thanks{Dipartimento di Matematica, Universit\`a degli Studi di Padova, via Trieste 63, 35121 Padova (Italy); e-mail addresses: $\{$andreis, barbato, proz$\}$@math.unipd.it} \and David Barbato\footnotemark[1] \and Francesca Collet\thanks{Dipartimento di Matematica, Alma Mater Studiorum Universit\`a di Bologna, piazza di Porta San Donato 5, 40126 Bologna (Italy); e-mail address: francesca.collet@unibo.it} \and Marco Formentin\thanks{Dipartimento di Fisica e Astronomia ``Galileo Galilei'', Universit\`a degli Studi di Padova, via Marzolo 8, 35131 Padova (Italy); e-mail address: marco.formentin@rub.de} \and Luigi Provenzano\footnotemark[1]}


\setlength{\parindent}{0pt}

\linespread{1.2}

\begin{document}

\maketitle

\begin{abstract}
\noindent 
We consider an inviscid stochastically forced dyadic model, where the additive noise acts only on the first component. We prove that a strong solution for this problem exists and is unique by means of uniform energy estimates. Moreover, we exploit these results to establish strong existence and uniqueness of the stationary distribution.

\vspace{0.3cm}

\noindent {\bf Keywords:} inviscid dyadic model, infinite dimensional system of SDEs, pathwise uniqueness, strong solution, strong statistically stationary solution  \\ \\
\end{abstract}

\section{Introduction}

From a physical point of view, turbulence is characterized by the presence of an energy flow which, through local nonlinear interactions, transfers all the energy of the velocity field from larger to smaller scales. The energy is then dissipated by a viscous term. The energy-cascade mechanism is a phenomenon widely observed in fluid dynamics experiments and at present lacking rigorous mathematical understanding. 

Dyadic (or shell) models have been studied as toy models for the Euler and Navier-Stokes equations (inviscid and viscous case respectively). Even though much simpler, they display peculiar features of the nonlinear structure of fluid dynamic equations. In particular, they mimic the scale by scale local transfer of energy occurring in three-dimensional turbulent flows. Among the most remarkable applications it is worth to mention \cite{Tao14}, in which a dyadic model type approximation is used to show the emergence of blow-up for an averaged Navier-Stokes system.\\
The dyadic model was early introduced by Desnianskii and Novikov in 1974 \cite{DeNo74} and, in recent times, independently reintroduced by Katz and Pavlovi\'c in 2005 \cite{KaPa05}. Since then, it has met the interest of a wide scientific community and several versions have been extensively investigated: viscous \cite{BaMoRo11, BeFe12}; inviscid \cite{BaMo13, BaMoRo11, BeFe12, ChCoFrSh08, ChFrPa10, KiZl05}; stochastically forced, either with additive \cite{FrG-HVi, Rom14} or multiplicative noise \cite{BaFlMo10, BaFlMo11, BaMo13stochastic, Bia13}.

The present paper is concerned with the study of the following stochastically driven shell model
\begin{equation}\label{model_stochastic}
\left\{\begin{array}{l l }
du_0=-u_0u_1dt+\sigma dW(t) &\\
du_j= \left( -2^{cj}u_ju_{j+1}+2^{c(j-1)}u_{j-1}^2 \right) dt & \mbox{ for } j\geq1 \\
u(0) = \underline{u}, &   
\end{array}\right.
\end{equation}
with $t \in [0,T]$, $\sigma \in \mathbb{R}_+$, $c \, \in \, [1,3]$, $\underline{u} \in \ell^2$ , $\underline{u}_j\geq 0$ for every $j\geq 1$, and where $\{W(t): t \geq 0\}$ is a one dimensional Brownian motion.  \\
The equations describe an inviscid dyadic system, whose peculiarity is that the additive noise affects only the first component. The random perturbation is therefore one dimensional unlike what is treated in \cite{BaFlMo10, BaFlMo11, BaMo13stochastic, Bia13, Rom14} where the noise acts on all the components and is indeed infinite dimensional. \\
The constant $c$ is an intermittency parameter and, roughly speaking, represents the velocity of the energy transfer from shell to shell. The range of values $c \in \left[ 1, \frac{5}{2} \right]$ is essentially the one corresponding, within the simplification of the model, to the 3D Euler equations. The range arises from scaling arguments applied to the nonlinear term \cite{ChFr09}.\\
Observe that the peculiar expression of the quadratic terms in \eqref{model_stochastic} provides the \emph{formal} conservation property
\[
\sum_{j=0}^{+\infty} u_j \left( 2^{c(j-1)}u_{j-1}^2 -2^{cj}u_ju_{j+1} \right) = 0 
\] 
that gives an a priori bound on the solution $u$ in $ \ell^2$ uniformly in $c$. Additionally, the fact that the state variables lie on the positive real half-line forces the energy to move from lower to higher wave numbers only. 

Model \eqref{model_stochastic} has been introduced in \cite{FrG-HVi}, where the authors establish that it admits invariant martingale solutions. In our paper we focus on strong solutions and prove that a strong stationary distribution exists and is unique in the class of positive solutions. We would like to stress here that the monotonicity of the energy flow in dyadic models allows to derive our statements by using arguments and techniques that are more immediate than those used for the stochastic 2D Navier-Stokes problem \cite{HaMa06,HaMa08}. In particular, the key point is having at hand a sort of pathwise contraction property \eqref{contrazione} of the dynamics  that, in turn, enables to get uniqueness of the stationary distribution almost straightforwardly. An analogue of \eqref{contrazione} is only known to be valid for Burgers equations \cite{Bor13}; whereas, it is believed to be false for Euler equations.

More in detail, our manuscript is organized as follows. \\
\emph{Section~\ref{pathwise}.} We deal with existence and uniqueness of pathwise solutions for problem~\eqref{model_stochastic}. We start by showing that there exists a uniform energy bound, which ensures the existence of a global pathwise solution (see Subsection~\ref{subsect:existence}). In Subsection~\ref{subsect:regularity} we provide a regularity condition satisfied by the trajectories. The proof follows from a modification of an approach used in \cite{FrG-HVi}. Our idea consists in replacing expectations by integrals over a time interval so to get a pathwise, rather than on average, property. The statements in Subsections~\ref{subsect:existence} and \ref{subsect:regularity} are valid over the entire range of $c \in [1,3]$. At the end of the section we show continuity with respect to positive initial conditions and noise in the range of $c \in [1,3)$ (see Subsection~\ref{subsect:continuity}). The derivation of this result makes crucial use of the solution regularity obtained in the previous subsection. With this in hand, we straightforwardly get pathwise uniqueness of the solution. \\
\emph{Section~\ref{sect:strong:sol}.} This section is devoted to demonstrating existence and uniqueness in a strong sense both of the solution and of the invariant distribution. We begin by proving that the continuity of the unique pathwise solution of \eqref{model_stochastic} guarantees the adaptability of the trajectory with respect to the filtration generated by the initial datum and the Brownian motion, providing strong existence and uniqueness. At this stage, having uniqueness in law (due to the classical Yamada and Watanabe theorem) and the existence of a weak stationary solution (provided by \cite{FrG-HVi}), we are also able to ensure that a positive strong statistically stationary solution of \eqref{model_stochastic} exists. As for the uniqueness of this statistically invariant state, the proof is based on an optimal transport argument. Roughly speaking, we introduce a distance between probability measures as a cost function to be minimized. Then, we show that if two different stationary distributions existed,  the minimality prescribed by Kantorovich's formulation of the problem would be violated. The statements in Section~\ref{sect:strong:sol} are valid in the restricted range of $c \in [1,3)$.\\[-.4cm]

To our knowledge we provide the first result of strong existence and pathwise uniqueness for a stochastic shell model in the inviscid  case.

\section{Pathwise solution}\label{pathwise}

In this section we consider the deterministic system obtained from \eqref{model_stochastic} by fixing a realization of the Brownian motion. Observe that in system \eqref{model_stochastic} the stochastic integral appears only in the equation 
$du_0(t)=-u_0(t)u_1(t)dt+\sigma dW(t)$, which is equivalent to 
$$u_0(t)=u_0(0)-\int_0^tu_1(s)u_0(s)ds+\sigma W(t) \ \ a.s..$$
Therefore, for every $\omega \in \Omega$ such that $w(\cdot):=W(\cdot,\omega) \in C([0,T],\mathbb{R})$ with $w(0)=0$, it is natural to define the following infinite dimensional deterministic system:
\begin{equation}\label{model}
\left\{\begin{array}{l l }
\displaystyle{u_0(t)=u_0(0)-\int_0^tu_1(s)u_0(s)ds+\sigma w(t)} &\\
du_j= \left( -2^{cj}u_ju_{j+1}+2^{c(j-1)}u_{j-1}^2 \right) dt & \mbox{ for } j\geq1 \\
u(0) = \underline{u}\, , &   
\end{array}\right.
\end{equation}
where $t \in [0,T]$, $\sigma \in \mathbb{R}_+$, $c \, \in \, [1,3]$, $\underline{u} \in H_+$ and $w \in C([0,T],\mathbb{R})$ with $w(0)=0$. Here $H_+$ is the set of sequences which are positive away from the component $j=0$, that is
$H_+= \left\{u \, \in \, \ell^2 \, : u_j\geq 0, \ j\geq1 \right\}$.

\begin{remark}
If the initial condition $\underline{u} \in H_+$, then $u(t) \in H_+$ for all $t \in [0,T]$. We will make use of this positivity condition in the forthcoming computations.
\end{remark}

In this section, we will obtain results concerning solutions of \eqref{model}. Since they are provided for a fixed element  $\omega \in \Omega$, they are equivalent to pathwise results for solutions of the stochastic system \eqref{model_stochastic}.  We start by introducing the definition of solution for \eqref{model}.

\begin{definition}[Solution of system \eqref{model}]
We say  that $u$ is a \emph{solution} of system \eqref{model} on $[0,T]$ with initial condition $\underline{u} \in H_+$ and noise $w \in C([0,T],\mathbb{R})$ with $w(0)=0$, 
if $u$  satisfies system \eqref{model}, $u_j \in C([0,T], \mathbb{R})$  for all $j$ and $u(t) \in H_+$ for all $t \in [0,T]$. 
\end{definition}

In the sequel we will denote by $\| \cdot \|$ the $\ell^2$-norm and by $\| \cdot \|_{\infty}$ the sup-norm.

\subsection{Existence}\label{subsect:existence}

The aim of this subsection is to establish an existence result for system \eqref{model}.

\begin{theorem}[Existence]\label{pathwise_exist}
For every  $T \in [0,+\infty)$, $\sigma \in \mathbb{R}_+$, $c \in [1,3]$, $\underline{u} \in H_+$ and $w \in C([0,T],\mathbb{R})$ with $w(0)=0$, system \eqref{model} admits at least a solution. 
\end{theorem}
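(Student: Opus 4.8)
The plan is to construct a solution by Galerkin truncation followed by a compactness argument. For each $n\ge 1$ I would introduce the finite-dimensional system obtained from \eqref{model} by retaining the components $u_0^{(n)},\dots,u_n^{(n)}$ and setting $u_j^{(n)}\equiv 0$ for $j>n$; concretely, $u_{n+1}^{(n)}$ is replaced by $0$ in the evolution of $u_n^{(n)}$. Since $w$ is merely continuous, the first component solves the integral (Carath\'eodory) equation $u_0^{(n)}(t)=u_0(0)-\int_0^t u_1^{(n)}u_0^{(n)}\,ds+\sigma w(t)$, while the remaining components satisfy genuine ODEs with polynomial right-hand sides. As the nonlinearity is locally Lipschitz, a standard fixed-point argument yields a unique local solution of the truncated system; the positivity $u_j^{(n)}(t)\ge 0$ for $j\ge 1$ is preserved exactly as in the Remark following \eqref{model}, because on the hyperplane $\{u_j=0\}$ the drift of the $j$-th equation reduces to $2^{c(j-1)}(u_{j-1}^{(n)})^2\ge 0$.

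The core of the argument is a uniform-in-$n$ energy estimate, and I expect this to be the main obstacle, since the noise $w$ is not differentiable. To handle this I would work with the shifted variable $v_0^{(n)}:=u_0^{(n)}-\sigma w$, which is absolutely continuous and satisfies $\dot v_0^{(n)}=-u_0^{(n)}u_1^{(n)}$. Multiplying the $j$-th equation by $u_j^{(n)}$ and summing, the quadratic terms telescope --- this is precisely the formal conservation property recorded in the Introduction --- leaving $\frac12\frac{d}{dt}\sum_{j=1}^n (u_j^{(n)})^2=(u_0^{(n)})^2 u_1^{(n)}$, where the top boundary term vanishes thanks to the truncation. Combining this with $\frac12\frac{d}{dt}(v_0^{(n)})^2=-v_0^{(n)}u_0^{(n)}u_1^{(n)}$ and substituting $u_0^{(n)}=v_0^{(n)}+\sigma w$, the modified energy $\mathcal E_n:=\frac12\big((v_0^{(n)})^2+\sum_{j=1}^n (u_j^{(n)})^2\big)$ obeys $\frac{d}{dt}\mathcal E_n=\sigma w\,u_0^{(n)}u_1^{(n)}$. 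Using $\|w\|_{\infty}<\infty$ together with the bounds $|v_0^{(n)}|,\,u_1^{(n)}\le \sqrt{2\mathcal E_n}$, the right-hand side is controlled by $C(\mathcal E_n+\sqrt{\mathcal E_n})$ with $C$ depending only on $\sigma$ and $\|w\|_{\infty}$; Gronwall's inequality then gives a bound on $\mathcal E_n(t)$, hence on $\|u^{(n)}(t)\|$, that is uniform in $n$ and finite on $[0,T]$. In particular the local solutions do not blow up and extend to all of $[0,T]$.

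Finally I would pass to the limit. The uniform energy bound makes each component family $\{u_j^{(n)}\}_n$ uniformly bounded, and for $j\ge 1$ the ODE gives $|\dot u_j^{(n)}|\le (2^{cj}+2^{c(j-1)})\,\|u^{(n)}\|^2$, so each family is uniformly Lipschitz, hence equicontinuous; for $j=0$ equicontinuity follows from the integral equation and the uniform continuity of $w$ on $[0,T]$. By the Arzel\`a--Ascoli theorem applied componentwise and a diagonal extraction, a subsequence converges uniformly on $[0,T]$ for every $j$ to some limit $u_j\in C([0,T],\mathbb R)$. Fatou's lemma and the uniform bound yield $\|u(t)\|^2\le \limsup_n\|u^{(n)}(t)\|^2<\infty$ and $u_j(t)\ge 0$ for $j\ge1$, so $u(t)\in H_+$. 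Since each equation involves only finitely many components, all of which converge uniformly while remaining uniformly bounded, the nonlinear integrands converge uniformly and one may pass to the limit in both the integral equation for $u_0$ and the equations for $u_j$, $j\ge1$. The limit $u$ therefore solves \eqref{model} on $[0,T]$, which proves existence.
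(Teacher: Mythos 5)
Your proposal is correct, and its skeleton is the same as the paper's: finite-dimensional truncation, a uniform-in-$n$ energy bound, Arzel\`a--Ascoli with a diagonal extraction, and passage to the limit in the integral equations. The genuine difference is in how the key uniform energy estimate is obtained. The paper first bounds the zeroth component alone by a sign argument: at a maximizer $t^*$ of $|u_0^{(N)}|$ it considers the last zero $\bar t$ of $u_0^{(N)}$ before $t^*$, and uses positivity of $u_1^{(N)}$ (so that $-\int_{\bar t}^{t^*}u_0^{(N)}u_1^{(N)}\,ds\le 0$ there) to get $|u_0^{(N)}(t)|\le a:=\|\underline{u}\|+2\sigma\|w\|_{\infty}$; it then feeds this into the telescoped identity
\[
\frac{d}{dt}\sum_{j=1}^N \bigl(u_j^{(N)}\bigr)^2=2\bigl(u_0^{(N)}\bigr)^2u_1^{(N)}\le 2a^2\Bigl(\sum_{j=1}^N \bigl(u_j^{(N)}\bigr)^2\Bigr)^{1/2}
\]
and closes by ODE comparison, obtaining the bound $(a^2T+2a)^2$, which is polynomial in $T$, $\|\underline{u}\|$ and $\sigma\|w\|_{\infty}$. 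You instead absorb the rough noise through the shift $v_0^{(n)}=u_0^{(n)}-\sigma w$ and run Gronwall on the modified energy $\mathcal{E}_n$; your identity $\frac{d}{dt}\mathcal{E}_n=\sigma w\,u_0^{(n)}u_1^{(n)}$ checks out, and the estimate closes as claimed. Your route has two advantages: it never uses positivity in the energy bound (so it would survive for sign-changing solutions, unlike the paper's argument, which needs $u_1^{(N)}\ge 0$), and it handles the non-differentiability of $w$ cleanly. What it gives up is quantitative: Gronwall yields a constant growing exponentially in $T$, whereas the paper's comparison argument gives the polynomial dependence that is recorded in the subsequent Proposition (Energy bound) and echoed in the constants of the regularity and continuity results; for the existence statement itself this is immaterial. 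Two small points are actually handled more carefully in your write-up than in the paper: for $j=0$ you invoke equicontinuity via the integral equation and uniform continuity of $w$ (the paper's equi-Lipschitz bound $\bigl|\frac{d}{dt}u_j^{(N)}\bigr|\le 2^{cj+1}\|u^{(N)}\|^2$ does not literally apply to $j=0$), and you verify explicitly that the limit lies in $H_+$.
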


The idea of the proof consists in considering a truncated version of \eqref{model}, for which global existence is ensured by uniform energy estimates, and then taking the limit with standard arguments. 

\begin{proof}
We first introduce a finite dimensional truncation of \eqref{model}. For any $N \in \mathbb{N}$, we consider the first $N$ equations of \eqref{model}, that is
\begin{equation}\label{modelN}
  \left\{\begin{array}{l l }
\displaystyle{u^{(N)}_0(t)=u_0(0)-\int_0^tu^{(N)}_0(s)u^{(N)}_1(s)ds+\sigma w(t)} &\\
du^{(N)}_j(t)= \left[-2^{cj}u^{(N)}_j(t)u^{(N)}_{j+1}(t)+2^{c(j-1)} \left(u^{(N)}_{j-1}(t) \right)^2\right] dt & \mbox{ for } j=1,\dots,N\\
u_j^{(N)}(0) = \underline{u}_j
& 
\mbox{ for } j=1, \dots, N          \\
u^{(N)}_j(t)\equiv0 & \mbox{ for } j \geq N+1
\end{array}\right.
\end{equation}
with $t \in [0,T]$ and $\underline{u}^{(N)} \in H_+$. 
System (\ref{modelN}) satisfies the hypotheses of Cauchy-Lipschitz theorem and then admits a local solution $\left(u_n^{(N)} \right)_{n \in \mathbb{N}}$ on $[0,\delta]$, for a certain $\delta>0$. 
We start by showing that the component $u_0^{(N)}(t)$ in \eqref{modelN} is uniformly bounded in $N$. To do so, we consider equation
\begin{equation*}\label{u_0}
u_0^{(N)}(t)=u^{(N)}_0(0)-\int_0^tu^{(N)}_0(s)u^{(N)}_1(s)ds +\sigma w(t).
\end{equation*}
Without loss of generality, we may assume 
\[
\max_{t \in [0,\delta] } \left| u^{(N)}_0(t) \right|= u^{(N)}_0(t^*) > 0,
\] 
for some $t^* \in [0,\delta]$ (the case $u^{(N)}_0(t^*) < 0$ can be treated similarly by symmetry). Now, let $\bar{t} = \sup \left\{ t \leq t^*: u^{(N)}_0(t)=0 \right\}$; if the set $\left\{ t \leq t^*: u^{(N)}_0(t)=0 \right\}$ is empty, let $\bar{t}=0$. Then, we have 
\begin{align*}
u^{(N)}_0(t^*) & \leq \left\vert u^{(N)}_0(0) \right\vert -\int_{\bar{t}}^{t^*}u^{(N)}_0(s)u^{(N)}_1(s) ds + \sigma [w(t^*)-w(\bar{t})] \\
&\leq \left\vert u^{(N)}_0(0) \right\vert + 2\sigma \sup_{t\in[0,T]}|w(t)| .
\end{align*} 
Therefore, for every $t$, it holds
\begin{equation}\label{bound_u0}
\left| u_0^{(N)}(t) \right| \leq a\, ,\quad  \mbox{ with }  \quad a := \|\underline{u}\| + 2 \sigma \|w\|_{\infty} \, .
\end{equation}
Now, we apply \eqref{bound_u0} to get a bound for the energy associated with dyadic model \eqref{modelN}. We start by considering the inequality
\[
\left\| u^{(N)}(t) \right\|^2 = \sum_{j=0}^N \left( u_j^{(N)}(t) \right)^2 \leq a^2 + \sum_{j=1}^N \left( u^{(N)}_j(t) \right)^2 \,.
\]
It is easy to see that, by taking the derivative of the summation term in the right-hand side of previous formula, thanks to cancellations, we get
\[
  \hspace{0.5cm }\frac{d}{dt}\left [\sum_{j=1}^N \left( u_j^{(N)}(t) \right)^2\right ]=2 \left( u_0^{(N)}(t) \right)^2 u_1^{(N)}(t) \leq 2 a^2 u^{(N)}_1(t) \leq 2 a^2 \sqrt{\sum_{j=1}^N \left( u_j^{(N)}(t) \right)^2} ,
\]
from which it follows by comparison
\[
\sum_{j=1}^N \left( u_j^{(N)}(t) \right)^2\leq \left( a^2 t +\sqrt{\sum_{j=1}^N \left( u_j^{(N)}(0) \right)^2}\right)^2\leq \left(a^2t+a\right)^2.
\]
Summarizing, we conclude
\begin{equation} \label{estimateN}
\left\|u^{(N)}(t)\right\|^2\leq a^2 + \left( a^2t +a \right)^2\leq \left(a^2T+2a\right)^2.
\end{equation}
From this bound we get global existence of the solution of system \eqref{modelN}.\\
We are left to prove existence for the infinite dimensional system \eqref{model}. We will obtain the result by means of Ascoli-Arzel\`a theorem and a standard diagonal argument. \\
For every fixed $j$ and $t \in [0,T]$, it holds:
\begin{itemize}
\item[i)] Uniform boundedness of $\left( u_j^{(N)} (t) \right)_{N \in \mathbb{N}}$ in both $N$ and $t$:
\begin{equation*}
\left\vert u_j^{(N)}(t) \right\vert  \leq \left\|u^{(N)}(t)\right\| \leq  a^2 T + 2a;
\end{equation*}
\item[ii)] Equi-Lipschitzianity of $\left( u_j^{(N)} (t) \right)_{N \in \mathbb{N}}$ with respect to $N$: by i), we have
\begin{equation*}
\left\vert \frac{d}{dt} u^{(N)}_j(t) \right\vert \leq 2^{cj+1} \, \left\|u^{(N)}(t)\right\|^2 \leq  2^{cj+1} \, \left( a^2 T + 2a \right)^2.
\end{equation*}
\end{itemize}
Ascoli-Arzel\`a theorem implies for each fixed $j$ the existence of a convergent subsequence in $C([0,T])$; i.e., it is possible to find indeces $\left\{ N^j_k, k \in \mathbb{N} \right\}$ such that
\[
\sup_{t \in [0,T]} \left\vert u_j^{(N^j_k)}(t) - u_j(t) \right\vert \, \stackrel{k\uparrow\infty}{\longrightarrow} \, 0 \quad \mbox{ for fixed $j$.}
\] 
The sequences $N_\bullet^j$ can be chosen so that $N_\bullet^{j+1}$ is a subsequence of $N_\bullet^j$ itself. By a standard diagonal argument we can extend the convergence to all $j$. Indeed, if we consider indeces $N_k := N_k^k$, we are extracting a common sub-subsequence such that 
\begin{equation*}
\sup_{t \in [0,T]} \left\vert u_j^{(N_k)}(t) - u_j(t) \right\vert \, \stackrel{k\uparrow\infty}{\longrightarrow} \, 0 \qquad \mbox{ for all } j\geq0.
\end{equation*} 
By taking the limit in the integral representation of the solution of \eqref{modelN}, one can see that 
the uniform limit $u=(u_j)_{j\geq0}$ in $C([0,T],\mathbb{R})$ is indeed a solution of \eqref{model}.  
\end{proof}

We can exploit part of the proof of Theorem~\ref{pathwise_exist} to simply obtain an energy estimate analogous to \eqref{estimateN} but valid for \emph{all} the solutions of the original system \eqref{model}.

\begin{proposition}[Energy bound]
For every $T \in [0,+\infty)$, $\sigma \in \mathbb{R}_+$, $c \in [1,3]$, $\underline{u} \in H_+$ and $w \in C([0,T],\mathbb{R})$ with $w(0)=0$, there exists $K_1=K_1(\sigma\|w\|_{\infty}, \|\underline{u}\|, T)$ that depends polynomially on  $\sigma\|w\|_{\infty}, \|\underline{u}\|$ and $T$ and such that any solution $u$ of system \eqref{model} satisfies the energy estimate
\begin{equation}\label{bound_norma}
\|u(t)\|^2\leq K_1 \ \ \ \mbox{ for all } t \in [0,T].
\end{equation}  

\end{proposition}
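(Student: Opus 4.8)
The plan is to repeat, for a genuine (untruncated) solution, the two-step argument already carried out in the proof of Theorem~\ref{pathwise_exist}, the only novelty being that the finite cut-off $u^{(N)}_{N+1}\equiv 0$ is no longer available and must be replaced by the positivity of the solution. First I would bound the zeroth component. Estimate \eqref{bound_u0} was obtained using \emph{only} the integral equation for $u_0$ together with the sign condition $u_1\ge 0$; since any solution $u$ of \eqref{model} lies in $H_+$, the identical argument (pick the time $t^*$ of maximal $|u_0|$, let $\bar t$ be the last zero of $u_0$ before $t^*$, and discard the nonpositive integral $-\int_{\bar t}^{t^*} u_0 u_1\,ds$) yields $|u_0(t)|\le a$ with $a:=\|\underline u\|+2\sigma\|w\|_\infty$, for every $t\in[0,T]$.

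Next I would control the remaining energy $\sum_{j\ge 1}u_j^2$ through its partial sums. For $j\ge 1$ each $u_j$ solves an ODE whose right-hand side is continuous (recall $u_0$ is continuous), hence $u_j\in C^1([0,T])$ and the finite sum $S_N(t):=\sum_{j=1}^N u_j^2(t)$ is differentiable. Differentiating and telescoping exactly as in the existence proof, but now \emph{retaining} the last term (which in \eqref{modelN} vanished because $u^{(N)}_{N+1}\equiv 0$), I obtain
\begin{equation*}
\frac{d}{dt}S_N(t)=2\,u_0^2(t)\,u_1(t)-2\cdot 2^{cN}u_N^2(t)\,u_{N+1}(t).
\end{equation*}
Here is the crux, and the only place where the infinite system genuinely differs from its truncation: since $u\in H_+$ one has $u_N,u_{N+1}\ge 0$, so the boundary term is nonnegative and may be dropped. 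Combining with $|u_0|\le a$ and $u_1\le\sqrt{S_N}$ gives the differential inequality $\frac{d}{dt}S_N\le 2a^2 u_1\le 2a^2\sqrt{S_N}$.

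Finally I would integrate. The comparison estimate $\frac{d}{dt}\sqrt{S_N}\le a^2$ (valid wherever $S_N>0$, and handled in the standard way at the zeros of $S_N$) yields $\sqrt{S_N(t)}\le \sqrt{S_N(0)}+a^2 t\le \|\underline u\|+a^2 T\le a+a^2 T$, a bound uniform in $N$. Letting $N\to\infty$ by monotone convergence and adding $u_0^2\le a^2$, I conclude
\begin{equation*}
\|u(t)\|^2\le a^2+\left(a+a^2 T\right)^2\le \left(a^2 T+2a\right)^2=:K_1,
\end{equation*}
which coincides with \eqref{estimateN} and depends polynomially on $\sigma\|w\|_\infty,\|\underline u\|,T$. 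I expect the main obstacle to be exactly the reappearance of the boundary flux $2^{cN}u_N^2 u_{N+1}$: in the truncated system it was identically zero, whereas here it is merely \emph{nonnegative}, and it is the positivity encoded in $H_+$ (already exploited to bound $u_0$) that lets us discard it with the correct sign, so that the finite-$N$ estimate survives the passage to the limit.
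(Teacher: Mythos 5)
Your proposal is correct and follows essentially the same route as the paper's own proof: bound $|u_0(t)|\le a$ by the argument already used for the truncated system, differentiate the partial sums $\sum_{j=1}^N u_j^2(t)$, discard the boundary flux term using the positivity encoded in $H_+$, integrate the resulting differential inequality, and let $N\to\infty$. (Incidentally, your coefficient $2\cdot 2^{cN}$ for the boundary term is the correct one; the paper writes $2^{c(N+1)}$, a harmless slip since that term is dropped by sign anyway.)
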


\begin{proof}
Consider a solution $u$ of system \eqref{model} and, for any $N \in \mathbb{N}$, let $u_0, u_1, \dots, u_N$ its first $N+1$ components. Notice that, by repeating the arguments as to derive \eqref{estimateN}, we get
\[
\vert u_0(t) \vert \leq a\, ,\quad  \mbox{ with }  \quad a := \|\underline{u}\| + 2 \sigma \|w\|_{\infty} 
\]
and
\[
   \hspace{1cm} \frac{d}{dt} \left[ \sum_{j=1}^N u_j^2(t) \right ] = 2 u_0^2(t) u_1(t) - 2^{c(N+1)} u_N^2(t) u_{N+1}(t) \leq 2 a^2 \sqrt{\sum_{j=1}^N u_j^2(t)},
\]
from which it follows
\begin{equation}\label{estimate}
\sum_{j=0}^N u_j^2(t) \leq \left( a^2 T + 2a \right)^2 \quad \mbox{ for every } N \geq 0.
\end{equation}
From \eqref{estimate}, by taking the limit as $N \to +\infty$, we conclude.
\end{proof}

It is worth to mention that the most part of following results heavily relies on \eqref{bound_norma}.

\subsection{Regularity}\label{subsect:regularity}

We aim at proving continuity of the solution $u$ of \eqref{model} with respect to the initial condition and to the function $w$ (see Theorem~\ref{thm:continuity} in Subsection~\ref{subsect:continuity}). To this purpose, the following regularity result will be crucial.

\begin{theorem}[Regularity]\label{pathwise_regularity}

For every $T \in [0,+\infty)$, $\sigma \in \mathbb{R}_+$, $c \in [1,3]$, $\underline{u} \in H_+$ and $w \in C([0,T],\mathbb{R})$ with $w(0)=0$, there exists $K_2=K_2(\sigma\|w\|_{\infty}, \|\underline{u}\|, T)$ that depends polynomially on  $\sigma\|w\|_{\infty}, \|\underline{u}\|$ and $T$ and such that any solution $u$ of system \eqref{model} satisfies the regularity condition
\begin{equation}\label{regularity}
\int_0^Tu_{j}^2(s)ds\leq K_2 \, 2^{-\frac{2}{3}cj}\, , \ \forall j \geq 0\,.
\end{equation}

\end{theorem}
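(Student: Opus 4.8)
The plan is to extract \eqref{regularity} from a self-improving recursion between consecutive shells, reproducing at the level of time-integrals the constant-flux computation that, in the stationary (expectation) setting of \cite{FrG-HVi}, selects the Kolmogorov exponent $\tfrac23$. Throughout I abbreviate $A_j:=\int_0^T u_j^2(s)\,ds$ and $\Pi_j:=2^{cj}u_j^2 u_{j+1}\ge 0$, the nonnegativity being guaranteed by $u(t)\in H_+$.

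First I would establish a uniform bound on the integrated flux $\int_0^T\Pi_j\,ds$. Since $\frac{d}{dt}u_i^2 = 2\Pi_{i-1}-2\Pi_i$ for $i\ge 1$, summing over $i=1,\dots,j$ and integrating on $[0,T]$ telescopes to $\int_0^T\Pi_j\,ds = \int_0^T\Pi_0\,ds-\tfrac12\sum_{i=1}^j\bigl(u_i^2(T)-u_i^2(0)\bigr)$. The term $\int_0^T\Pi_0\,ds=\int_0^T u_0^2 u_1\,ds$ is controlled by the pointwise bound $|u_0|\le a$ together with \eqref{bound_norma}, and the telescoping sum is bounded by $K_1$; hence $\int_0^T u_j^2 u_{j+1}\,ds\le C\,2^{-cj}$ for every $j\ge 0$, with $C$ polynomial in the data.

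Next I would integrate the $(j+1)$-th equation of \eqref{model} on $[0,T]$, which isolates $A_j$ in the form $2^{cj}A_j = \bigl[u_{j+1}(T)-u_{j+1}(0)\bigr]+2^{c(j+1)}\int_0^T u_{j+1}u_{j+2}\,ds$. Estimating the cross term by Cauchy--Schwarz as $\int_0^T u_{j+1}u_{j+2}\,ds\le\bigl(\int_0^T u_{j+1}^2 u_{j+2}\,ds\bigr)^{1/2}\bigl(\int_0^T u_{j+2}\,ds\bigr)^{1/2}$, then inserting the flux bound in the first factor and $\int_0^T u_{j+2}\,ds\le T^{1/2}A_{j+2}^{1/2}$ in the second, and using $|u_{j+1}(T)-u_{j+1}(0)|\le 2K_1^{1/2}$ from \eqref{bound_norma}, I arrive at the recursion $A_j\le c_1\,2^{-cj}+c_2\,2^{-cj/2}A_{j+2}^{1/4}$ with $c_1,c_2$ polynomial in the data. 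The boundary term contributes only the harmless $2^{-cj}$, while the exponent $\tfrac14$ on $A_{j+2}$ is precisely what will force the decay exponent $\tfrac23$.

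Finally I would solve this recursion by bootstrapping the decay exponent. Starting from the crude a priori bound $A_i\le T K_1$ and feeding an ansatz $A_i\le C_n\,2^{-\gamma_n c i}$ into the recursion produces $A_i\le C_{n+1}\,2^{-\gamma_{n+1}c i}$ with $\gamma_{n+1}=\tfrac12+\tfrac{\gamma_n}{4}$ and $C_{n+1}\le c_1+c_2\,C_n^{1/4}$; the exponents increase monotonically to the fixed point $\gamma^\ast=\tfrac23$, while the constants, governed by the concave map $x\mapsto c_1+c_2 x^{1/4}$ which admits a finite attracting fixed point, remain bounded and converge to some finite $K_2$. Letting $n\to\infty$ at each fixed $j$ then yields \eqref{regularity}. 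The delicate point is that the recursion runs \emph{upward}, bounding a low shell through a higher one, so there is no base case at large $j$ beyond the non-decaying bound $A_j\le TK_1$; the estimate can only be harvested by iterating to the fixed point, and the real work is to verify that the constants $C_n$ do not blow up as $\gamma_n\uparrow\tfrac23$, which in turn rests on the Cauchy--Schwarz split landing exactly on the power $A_{j+2}^{1/4}$.
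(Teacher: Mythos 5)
Your proposal is correct, and its first two steps coincide exactly with the paper's own proof: the uniform flux bound $\int_0^T u_j^2(s)u_{j+1}(s)\,ds\le C\,2^{-cj}$ obtained by telescoping the equations for $u_i^2$ (the paper's \eqref{stima:2}), and the recursion obtained by integrating the next shell's equation and applying H\"older twice (the paper's \eqref{stima:1}), which in your notation reads $A_j\le c_1\,2^{-cj}+c_2\,2^{-cj/2}A_{j+2}^{1/4}$ with $A_j=\int_0^Tu_j^2(s)\,ds$. The genuine difference lies in how this recursion is resolved. The paper first applies Young's inequality, $c_2\,2^{-cj/2}A_{j+2}^{1/4}\le k\,2^{-\frac{2}{3}cj}+\frac{1}{4}A_{j+2}$ (this is where the exponent $\frac{2}{3}$ first appears), which linearizes the recursion, and then invokes Lemma~\ref{lmm:sequence}: any bounded positive sequence with $a_n\le C_1a_{n+2}+C_2\,2^{-\lambda n}$, $0<C_1<1$, satisfies $a_n\le\frac{C_2}{1-C_1}2^{-\lambda n}$, proved by contradiction (a violating term forces all subsequent ratios above a constant $k>1$, so the sequence diverges, contradicting boundedness). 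You instead keep the nonlinear recursion and bootstrap the decay exponent, $\gamma_{n+1}=\frac{1}{2}+\frac{\gamma_n}{4}\uparrow\frac{2}{3}$, with constants governed by the concave map $x\mapsto c_1+c_2x^{1/4}$ and its attracting fixed point. Both resolutions rest on the same a priori input $A_j\le TK_1$ from \eqref{bound_norma} (this is exactly what the lemma's hypothesis $\limsup_n a_n<\infty$ encodes), and both are sound; yours makes transparent why $\frac{2}{3}$ is the fixed-point exponent, at the price of tracking two coupled iterations (exponents and constants), whereas the paper's single Young step plus an elementary linear lemma is shorter. Two details you should make explicit when writing this up: the absorption of the boundary term $c_1\,2^{-cj}$ into $c_1\,2^{-\gamma_{n+1}cj}$ uses $\gamma_{n+1}<\frac{2}{3}<1$, and the limiting constant $K_2$ is indeed polynomial in $\sigma\|w\|_\infty$, $\|\underline{u}\|$, $T$ because the fixed point $x^*$ of $x=c_1+c_2x^{1/4}$ satisfies, by one more application of Young's inequality, $x^*\le\frac{4}{3}c_1+c_2^{4/3}$.
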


In \cite{FrG-HVi} authors establish that a statistically stationary martingale solution $\bar{u}$ of \eqref{model_stochastic} satisfies the bound $\mathbb{E} ( \bar{u}_j^2 ) \leq k \, 2^{-\frac{2}{3}cj} $, with $k$ positive constant. To prove Theorem \ref{pathwise_regularity} we adapt the method they used to derive such an estimate. Our proof relies on the idea of replacing the average by an integral over the interval $[0,T]$. This trick allows to get regularity properties of pathwise  type.

Before proving Theorem \ref{pathwise_regularity}, we need the following technical lemma on real sequences.

\begin{lemma}\label{lmm:sequence}
Let $(a_n)_{n \in \mathbb{N}}$ be a sequence of positive real numbers such that $\limsup_{n\rightarrow+\infty} a_n<+\infty$. Suppose that, for some constants $\lambda > 0$, $0<C_1<1$ and $C_2 > 0$, there exists $\hat{n}$ such that, for $n \geq \hat{n}$, either
\begin{itemize}
\item[i)]
 $a_n\leq C_1 a_{n+1}+C_2 2^{-\lambda n}$
 \end{itemize}
 or
 \begin{itemize}
 \item[ii)] $a_n\leq C_1 a_{n+2}+C_2 2^{-\lambda n}$
 \end{itemize}
is satisfied. Then, for any $n \geq \hat{n}$, it holds
 \[
 a_n \leq \frac{C_2}{1-C_1} \, 2^{-\lambda n}.
 \]
\end{lemma}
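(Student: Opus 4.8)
The plan is to unfold the recursive inequality along a chain of indices whose step sizes are dictated at each stage by whichever of the two alternatives happens to hold, and then to let the number of iterations tend to infinity. Because the bound we are aiming for is insensitive to the precise pattern of steps, the mixed nature of the recursion (advancing by $1$ or by $2$) will cause no real trouble.

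Fix $n \geq \hat{n}$ and define a strictly increasing sequence of indices $m_0 < m_1 < m_2 < \cdots$ by setting $m_0 := n$ and, for each $i \geq 0$,
\[
m_{i+1} :=
\begin{cases}
m_i + 1 & \text{if alternative (i) holds at } m_i,\\
m_i + 2 & \text{if alternative (ii) holds at } m_i.
\end{cases}
\]
Since $m_i \geq m_0 = n \geq \hat{n}$ for every $i$, the hypothesis applies at each index $m_i$, yielding the one-step inequality $a_{m_i} \leq C_1 a_{m_{i+1}} + C_2\, 2^{-\lambda m_i}$. Moreover each step increases the index by at least $1$, so $m_i \geq n + i$; in particular $m_i \geq n$ for all $i$.

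Next I would iterate this one-step inequality $m$ times. A straightforward induction gives
\[
a_n \;\leq\; C_1^{m}\, a_{m_m} \;+\; C_2 \sum_{i=0}^{m-1} C_1^{i}\, 2^{-\lambda m_i}.
\]
Using $m_i \geq n$ in each summand I bound $2^{-\lambda m_i} \leq 2^{-\lambda n}$, factor it out, and sum the geometric series (recall $0 < C_1 < 1$):
\[
C_2 \sum_{i=0}^{m-1} C_1^{i}\, 2^{-\lambda m_i} \;\leq\; C_2\, 2^{-\lambda n} \sum_{i=0}^{\infty} C_1^{i} \;=\; \frac{C_2}{1-C_1}\, 2^{-\lambda n}.
\]

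It then remains to dispose of the remainder $C_1^{m} a_{m_m}$, and this is the only place the assumption $\limsup_{n\to\infty} a_n < +\infty$ enters: together with the positivity of the terms it guarantees that $(a_n)$ is bounded, say $a_n \leq M$ for all $n$, whence $C_1^{m} a_{m_m} \leq M\, C_1^{m} \to 0$ as $m \to \infty$. Letting $m \to \infty$ in the unfolded inequality yields $a_n \leq \frac{C_2}{1-C_1}\, 2^{-\lambda n}$, as claimed. The proof is essentially routine; the only mild point to watch is the uniformity of the argument over the a priori unknown sequence of alternatives, and the crude estimate $2^{-\lambda m_i} \leq 2^{-\lambda n}$ is precisely what makes the final bound independent of that choice.
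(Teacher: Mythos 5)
Your proof is correct, but it proceeds quite differently from the paper's. The paper argues by contradiction: if $a_{\bar n} > \frac{C_2}{1-C_1}2^{-\lambda \bar n}$ for some $\bar n \geq \hat n$, then dividing the recursive inequality by $C_1 a_{\bar n}$ gives $a_{\bar n+1}/a_{\bar n} > 1$, and an induction shows all subsequent ratios stay above a fixed constant $k>1$, so the sequence grows geometrically and diverges, contradicting $\limsup_n a_n < \infty$; alternative ii) is then dispatched by running the same argument along odd and even subsequences. You instead unfold the recursion directly along a chain of indices $m_0 = n < m_1 < m_2 < \cdots$ whose step sizes ($+1$ or $+2$) adapt to whichever alternative holds, obtaining $a_n \leq C_1^m a_{m_m} + C_2\sum_{i=0}^{m-1}C_1^i 2^{-\lambda m_i}$, and then let $m\to\infty$: the remainder dies because the sequence is bounded (this is where $\limsup < \infty$ enters for you, versus serving as the contradiction in the paper), and the geometric series produces the constant $\frac{C_2}{1-C_1}$ explicitly. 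Your route has two advantages: it is direct rather than by contradiction, making visible where the constant comes from, and it handles the two alternatives \emph{uniformly}, even in the stronger (mixed) reading of the hypothesis where the alternative in force may vary with $n$ --- a case the paper's case-split into ``i) for all $n$'' and ``ii) for all $n$'' does not literally address. The paper's argument is shorter on bookkeeping (no index chain to manage), but yours is the more robust of the two.
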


\begin{proof} We start by proving the assertion under hypothesis i). Suppose by contradiction that there exists $\bar{n} \geq \hat{n}$ such that $a_{\bar{n}} >  \frac{C_2}{1-C_1} \, 2^{-\lambda \bar{n}}$.
%
Consider the inequality in i) and divide both sides by $C_1 \, a_{\bar{n}}$. We get
$$
\frac{a_{\bar{n}+1}}{a_{\bar{n}}}\geq \frac{1}{C_1}-\frac{C_2 \, 2^{-\lambda \bar{n}}}{C_1 \, a_{\bar{n}}}>\frac{1}{C_1}-\frac{1-C_1}{C_1} = 1.
$$ 
Since $a_{\bar{n}+1} > a_{\bar{n}}$, if we repeat the reasoning for the next ratio, we get
\[
\frac{a_{\bar{n}+2}}{a_{\bar{n}+1}} \geq \frac{1}{C_1}-\frac{C_2 \, 2^{-\lambda (\bar{n}+1)}}{C_1 \, a_{\bar{n}+1}} > \frac{1}{C_1}-\frac{C_2 \, 2^{-\lambda \bar{n}}}{C_1 \, a_{\bar{n}}} > 1.
\]
Proceeding by induction on $j$, we obtain that the sequence $(a_{\bar{n}+j})_{j \in \mathbb{N}}$ is monotonically increasing in $j$ and moreover, for all $j$, it holds
\[
\frac{a_{\bar{n}+j+1}}{a_{\bar{n}+j}} \geq  \frac{1}{C_1}-\frac{C_2 \, 2^{-\lambda \bar{n}}}{C_1 \, a_{\bar{n}}} =: k > 1,
\] 
where $k$ is a constant independent of $j$. Previous inequality
implies that the sequence $(a_n)_{n \in \mathbb{N}}$ diverges, which is a contradiction.\\
If we assume hypothesis ii) instead, the result follows by repeating the same argument as above for odd (or even) subsequences.
\end{proof}

\begin{proof}[Proof of Theorem~\ref{pathwise_regularity}]
For $T \in [0,+\infty)$ and for all $j\geq1$, a solution $u$ of system \eqref{model} satisfies
\begin{equation}\label{diff:u}
u_j(T)-u_j(0)=2^{c(j-1)}\int_0^T u^2_{j-1}(s)ds-2^{cj}\int_0^T u_j(s)u_{j+1}(s)ds
\end{equation}
and
\begin{equation}\label{diff:u2}
  \hspace{0.7cm} u^2_j(T)-u^2_j(0)=2\cdot2^{c(j-1)}\int_0^T u^2_{j-1}(s)u_j(s)ds-2\cdot2^{cj}\int_0^T u^2_j(s)u_{j+1}(s)ds.
\end{equation}
We want to estimate the terms $\int_0^T u_j^2(s) \, ds$ and $\int_0^T u_j^2(s)u_{j+1}(s)\, ds$ for every $j\geq0$. Starting from equation \eqref{diff:u}, we obtain
\begin{align}\label{stima:1}
  \int_0^T u^2_{j-1}(s)ds & = 2^{-c(j-1)} \left[u_j(T)-u_j(0)\right] + 2^{c}\int_0^T u_j(s)u_{j+1}(s)ds \nonumber\\
& \leq k_1 \, 2^{-cj} + k_2 \left[\int_0^T u^2_j(s)u_{j+1}(s)ds\right]^{1/2}\left[\int_0^T u^2_{j+1}(s)ds\right]^{1/4},
\end{align}
where $k_1$, $k_2$ are polynomials depending on $\sigma\|w\|_{\infty}$, $\|\underline{u}\|$ and $T$. The last inequality follows from the energy bound \eqref{bound_norma} and by applying twice H\"older inequality. Now we need an estimate for $\int_0^T u_j^2(s)u_{j+1}(s) \, ds$. Summing up terms in \eqref{diff:u2} from $j=1$ to $j=N$, we obtain
\[
  \hspace{1cm} 2^{cN+1} \int_0^T u^2_N(s)u_{N+1}(s) \, ds = 2 \int_0^T u_0^2(s)u_1(s) \, ds - \sum_{j=1}^N \left[u^2_j(T)-u^2_j(0)\right] \leq k,
\]
again by \eqref{bound_norma}. Therefore, for all $j\geq1$, we have 
\begin{equation}\label{stima:2}
\int_0^T u^2_j(s)u_{j+1}(s) \, ds \leq k 2^{-cj},
\end{equation}
with $k$ positive and independent of $j$. By using estimate \eqref{stima:2} in \eqref{stima:1} and then applying Young inequality, we get
\begin{align*}
  \int_0^T u_{j-1}^2(s)ds & \leq k_1 \, 2^{-cj} + k_2 \left[\int_0^T u^2_j(s)u_{j+1}(s)ds\right]^{1/2}\left[\int_0^T u^2_{j+1}(s)ds\right]^{1/4}\\
&\leq  k \, 2^{-\frac{2}{3}cj}
+\frac{1}{4}\int_0^T u^2_{j+1}(s) \, ds.
\end{align*}
The constants $k$'s appearing in the previous calculations may change from line to line, but always keep their polynomial nature. Since  $k$ does not depend on $t$ and the energy bound \eqref{bound_norma}  holds, we can apply Lemma~\ref{lmm:sequence} to conclude~\eqref{regularity}.
\end{proof}

\subsection{Continuity}\label{subsect:continuity}

Next theorem is a result of continuity with respect to initial condition and noise. This will imply, on the one hand, uniqueness of solution $u$ of system~\eqref{model}; on the other, it will guarantee existence and uniqueness of strong solutions for system \eqref{model_stochastic}.\\

Before giving the result we need some more notation. For any $\alpha \in \mathbb{R}$, we denote by $H^{\alpha}$ the Sobolev-type space
$$
H^{\alpha} := \left\{ u=(u_n)_{n\in\mathbb{N}}: \left\| {u} \right\|^2_{\alpha} < \infty \right\},
$$
with norm $\| \cdot \|_{\alpha}$ given by
$
\| u \|_{\alpha}^2 :=\sum_{j=0}^{\infty}2^{2\alpha j } u_j^2 \,.
$
Notice that $H^0=\ell^2$. 

\begin{theorem}[Continuity]\label{thm:continuity}
Let $c\in[1,3)$, $T \in [0,+\infty)$, $\sigma \in \mathbb{R}_+$, $\underline{u} \in H_+$ , $w \in C([0,T],\mathbb{R})$ with $w(0)=0$. There exists a function $f:\mathbb{R}_+ \longrightarrow \mathbb{R}_+$ with $\lim_{\delta\rightarrow0}f(\delta)=0$ such that, for all $\underline{\tilde{u}}$ $\in$ $H_+$ and all $\tilde{w}$ $\in$ $C([0,T],\mathbb{R})$ with $\tilde{w}(0)=0$, if
\begin{equation*}
 \left\| \underline{u}-\underline{\tilde{u}} \right\| < \delta \quad \mbox{ and } \quad \left\| w-\tilde{w} \right\|_{\infty} < \delta \, ,
\end{equation*}
then
\begin{equation*}
\sup_{t\in [0,T]}\|u(t)-\tilde{u}(t)\|_{-\frac{1}{2}}<f(\delta) \, ,
\end{equation*}
where $u$ (resp. $\tilde{u}$) is a solution of system \eqref{model} with initial condition 
$\underline{u}$ (resp. $\underline{\tilde{u}}$) and noise $w$ (resp. $\tilde{w}$). Moreover, if $w=\tilde{w}$ and $\underline{u}\neq\underline{\tilde{u}}$, then
\begin{equation}\label{contrazione}
\| u(t)-\tilde{u}(t) \|_{-\frac{1}{2}}<\| u(0)-\tilde{u}(0) \|_{-\frac{1}{2}}.
\end{equation}  
\end{theorem}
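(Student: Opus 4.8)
The plan is to control the evolution of the weighted difference $\|u(t)-\tilde u(t)\|_{-1/2}^2$ by differentiating in time and exploiting an algebraic cancellation analogous to the formal energy conservation of the model. Write $v_j:=u_j-\tilde u_j$. For $j\geq1$ each $v_j$ solves a genuine ODE, while for $j=0$ one has, using $u_0u_1-\tilde u_0\tilde u_1=v_0u_1+\tilde u_0v_1$ and $u_{j-1}^2-\tilde u_{j-1}^2=(u_{j-1}+\tilde u_{j-1})v_{j-1}$, the integral relation $v_0(t)=v_0(0)+\sigma[w(t)-\tilde w(t)]-\int_0^t(v_0u_1+\tilde u_0v_1)\,ds$. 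I would first treat the case $w=\tilde w$, where $v_0$ is differentiable. Forming $\tfrac12\frac{d}{dt}\sum_{j=0}^N 2^{-j}v_j^2$ and collecting the off-diagonal couplings between consecutive shells, the coefficient of $v_jv_{j+1}$ telescopes to $\tfrac12\,2^{(c-1)j}v_j$: this is the key cancellation, and it works precisely because the weights $2^{-j}$ of $\|\cdot\|_{-1/2}$ are adapted to the dyadic structure. After this reorganization, and crucially using the positivity $u_{j+1},\tilde u_{j+1}\geq0$ guaranteed by $u(t),\tilde u(t)\in H_+$, the bulk reduces to the manifestly nonpositive dissipation $-\tfrac12\sum_{j=0}^{N-1}2^{(c-1)j}v_j^2(u_{j+1}+\tilde u_{j+1})$, leaving only a boundary remainder $R_N:=-2^{(c-1)N}v_N^2u_{N+1}-2^{(c-1)N}\tilde u_Nv_Nv_{N+1}$ at the truncation level.

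The main obstacle is showing $\int_0^T|R_N|\,ds\to0$ as $N\to\infty$, which is what legitimizes passing to the limit and yields the exact identity $\|v(t)\|_{-1/2}^2=\|v(0)\|_{-1/2}^2-\int_0^t\sum_{j\geq0}2^{(c-1)j}v_j^2(u_{j+1}+\tilde u_{j+1})\,ds$. This is exactly where the hypothesis $c<3$ enters. Expanding $R_N$ into triple products of the modes $N$ and $N+1$ and estimating each factor by the Cauchy--Schwarz inequality together with the flux bound \eqref{stima:2} and the regularity estimate \eqref{regularity}, the most dangerous contribution behaves like $2^{(c-1)N}\,\|u\|_\infty\int_0^T\tilde u_N^2\,ds\lesssim 2^{(\frac13 c-1)N}$, which tends to $0$ if and only if $c<3$; all remaining contributions decay at least like $2^{-N}$. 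The nonpositivity of the integrand then gives the monotonicity $\|v(t)\|_{-1/2}\leq\|v(0)\|_{-1/2}$ when $w=\tilde w$.

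For the continuity estimate I would reinstate $w\neq\tilde w$ and set $g(t):=\sigma[w(t)-\tilde w(t)]$, with $\|g\|_\infty<\sigma\delta$; only the $j=0$ mode feels it. Repeating the computation while keeping $g$ inside a time integral (so as never to differentiate the merely continuous $g$) reproduces the same dissipation plus error terms that are linear and quadratic in $g$, each bounded by $C(\|g\|_\infty+\|g\|_\infty^2)$ with $C$ depending only on the energy bound \eqref{bound_norma} and on $T$. Since moreover $\|v(0)\|_{-1/2}\leq\|v(0)\|=\|\underline u-\underline{\tilde u}\|<\delta$ (the weights satisfy $2^{-j}\leq1$), combining everything gives $\sup_{t\in[0,T]}\|v(t)\|_{-1/2}^2\leq \delta^2+C(\sigma\delta+\sigma^2\delta^2)=:f(\delta)^2$ with $f(\delta)\to0$; the constants stay uniform over bounded $\delta$ because $\|\underline{\tilde u}\|$ and $\|\tilde w\|_\infty$ remain controlled by $\|\underline u\|+\delta$ and $\|w\|_\infty+\delta$.

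Finally, for the strict contraction \eqref{contrazione} (with $w=\tilde w$ and $\underline u\neq\underline{\tilde u}$) I would argue by contradiction. If $\|v(t_1)\|_{-1/2}=\|v(0)\|_{-1/2}$ for some $t_1>0$, then monotonicity forces the nonnegative dissipation integrand to vanish on all of $[0,t_1]$, i.e.\ $v_j(s)^2(u_{j+1}(s)+\tilde u_{j+1}(s))=0$ for every $s\in[0,t_1]$ and every $j$, by continuity of the trajectories. Letting $m$ be the smallest index with $v_m(0)\neq0$, continuity gives $v_m\neq0$ on a right-neighbourhood of $0$, which forces $u_{m+1}\equiv\tilde u_{m+1}\equiv0$ there; but then the $(m+1)$-th equation reads $\dot u_{m+1}=2^{cm}u_m^2\equiv0$, so $u_m\equiv\tilde u_m\equiv0$ and hence $v_m\equiv0$, a contradiction. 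Therefore no such $t_1$ exists and the inequality is strict for every $t>0$.
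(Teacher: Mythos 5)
Your proposal is correct and follows essentially the same route as the paper's proof: you study the truncated $H^{-\frac12}$-norm of the difference, exploit the telescoping cancellation so that positivity (membership in $H_+$) makes the bulk term nonpositive, control the boundary term at truncation level $N$ by the regularity estimate \eqref{regularity} (which is exactly where $c<3$ enters, via $2^{(c-1)N}\,2^{-\frac23 cN}=2^{(\frac{c}{3}-1)N}$), handle the non-differentiable noise difference by keeping it inside time integrals (the paper does precisely this through its explicit $\phi_n$/$\psi_n$ decomposition), and bound the initial contribution by $\|\underline{u}-\underline{\tilde u}\|<\delta$. The one genuine difference is the strict contraction \eqref{contrazione}: the paper merely asserts that the dissipation term in \eqref{12} is strictly negative, whereas you actually prove it, by contradiction — if the continuous nonnegative integrand $v_j^2(u_{j+1}+\tilde u_{j+1})$ vanished identically on $[0,t_1]$, then taking $m$ minimal with $v_m(0)\neq 0$, positivity forces $u_{m+1}\equiv\tilde u_{m+1}\equiv 0$ on a right-neighbourhood of $0$, and then the $(m+1)$-th equation forces $u_m\equiv\tilde u_m\equiv 0$ there, contradicting $v_m\neq 0$. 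This argument is sound (it covers $m=0$ as well, since only the positivity of $u_1,\tilde u_1$ is used there), and it supplies a justification that the published proof leaves implicit; it is the one point where your write-up is more complete than the paper's.
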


Estimate \eqref{contrazione} is a contraction property  somehow unusual for models deriving from fluid dynamics. Nevertheless, in \cite{Bor13} it is possible to find an analogous estimate for the $L^1$-norm of the viscous Burgers model. Similarly to \cite{Bor13}, we use \eqref{contrazione} to study statistically stationary distributions.
\begin{proof}
We set
\[
y_j (t) :=u_j(t) + \tilde{u}_j(t) \quad \mbox{ and } \quad z_j (t) := u_j(t) - \tilde{u}_j(t).
\]
From \eqref{model}, we obtain a system of equations for $z_j$. It is readily seen that
\begin{equation}\label{eqs:Z}
  \left\{
\begin{array}{lc}
z_0(t) = \displaystyle{z_0(0)-\frac{1}{2}\int_0^t \left[z_0(s)y_1(s)+z_1(s)y_0(s) \right]ds+\sigma \left[w(t)-\tilde{w}(t) \right]} & \\
z_j(t) = \displaystyle{z_j(0)+2^{c(j-1)} \int_0^t z_{j-1}(s) y_{j-1}(s) \, ds} & \\
\phantom{z_j(t) =}  \qquad \displaystyle{- \frac{2^{cj}}{2} \int_0^t \left[ z_j(s) y_{j+1}(s) + z_{j+1}(s) y_j(s) \right] ds} & \mbox{ for } j \geq 1 \\
z_j(0)=\underline{u}_j-\underline{\tilde{u}}_j & \mbox{ for } j \geq 0 \\ 
\end{array}
\right.
\end{equation}
We borrow a trick from \cite{BaFlMo10} and we study the $H^{-\frac{1}{2}}$-norm of $z(t)$. We aim at getting an upper bound for 
\begin{align*}
\psi_n (t) &:= \sum_{j=0}^n \frac{z_j^2(t)}{2^j}\\
&=\phi_n(t)+2\sigma z_0(0)\left[w(t)-\tilde{w}(t)\right]+\sigma^2 \left[w(t)-\tilde{w}(t)\right]^2\\
&\qquad \qquad - \sigma \left[ w(t)-\tilde{w}(t)\right] \int_0^t \left[z_0(s)y_1(s)+z_1(s)y_0(s)\right]ds \,,
\end{align*}
where 
\begin{align*}
\phi_n(t) &:= z^2_0(0)+\frac{1}{4} \left( \int_0^t \left[z_0(s)y_1(s)+z_1(s)y_0(s)\right] ds\right)^2 \\
&\qquad\qquad -z_0(0)\int_0^t \left[z_0(s)y_1(s)+z_1(s)y_0(s)\right] ds +\sum_{j=1}^n\frac{z_j^2(t)}{2^j} \, . 
\end{align*}
First we study the quantity $\phi_n$. By using equations \eqref{eqs:Z}, we compute the derivative
\begin{align*}
\frac{d}{dt} \phi_n (t) &=\frac{1}{2} \left[z_0(t)y_1(t)+z_1(t)y_0(t)\right] \int_0^t \left[z_0(s)y_1(s)+z_1(s)y_0(s)\right]ds \\
&\qquad \qquad -z_0(0)[z_0(t)y_1(t)+z_1(t)y_0(t)] + \sum_{j=1}^n \frac{2}{2^j} \, z_j (t) \, \frac{d}{dt} z_j (t) \\
&= \left[z_0(t)y_1(t)+z_1(t)y_0(t)\right]\left\{-z_0(t)+z_0(0)+\sigma \left[w(t)-\tilde{w}(t)\right]\right\}   \\
&\qquad \qquad -z_0(0)[z_0(t)y_1(t)+z_1(t)y_0(t)] + \sum_{j=1}^n \frac{2}{2^j} \, z_j (t) \, \frac{d}{dt}z_j (t) \\
&= -\sum_{j=0}^n2^{(c-1)j}z_j^2(t)y_{j+1}(t)-2^{(c-1)n}z_n(t)z_{n+1}(t)y_n(t) \\
& \qquad \qquad +\sigma \left[w(t) -\tilde{w}(t)\right] \left[z_0(t)y_1(t)+z_1(t)y_0(t)\right].
\end{align*}
Now we estimate $\frac{d}{dt}\phi_n$. We observe that the first term is negative, since $y$ $\in$ $H_+$. Then, we are left to consider
\begin{align*}
\frac{d}{dt} \phi_n (t) &\leq - 2^{(c-1)n} \, z_n (t) \, z_{n+1} (t) \, y_{n} (t) +\sigma \left[ w(t)-\tilde{w}(t) \right] \left[z_0(t)y_1(t)+z_1(t)y_0(t)\right] \\
&\leq - 2^{(c-1)n} \, z_n (t) \, z_{n+1} (t)y_n(t)+2\sigma \delta \left(\|u(t)\|^2+\|\tilde{u}(t)\|^2\right) \\
&\leq - 2^{(c-1)n} \, z_n (t) \, z_{n+1} (t)y_n(t)+\delta k  \,,
\end{align*} 
where constant $k$ follows from \eqref{bound_norma}. In particular, $k$ is a polynomial in $\sigma \|w\|_{\infty}$, $\sigma \|\tilde{w}\|_{\infty}$, $\| \underline{u} \|$, $\|\underline{\tilde{u}} \|$ and $T$. In the sequel, the value of constants may change from line to line, but they are always polynomial functions of those quantities. By integrating the previous inequality, we get
\begin{align*}
  \phi_n (t) &   \leq  \phi_n(0) -2^{(c-1)n}\int_0^t [u_n(s)-\tilde{u}_n(s)][u_{n+1}(s)-\tilde{u}_{n+1}(s)][u_n(s)+\tilde{u}_n(s)]ds + \delta k T \\ 
%
%
&   \leq 2\delta^2+ k_1 \, 2^{(c-1)n} \left(\int_0^tu_n^2(s)ds+\int_0^t\tilde{u}_n^2(s)ds\right)+\delta k  T \\
%
%
%
%
&   \leq 2\delta^2+k_1 \, 2^{(\frac{c}{3}-1)n} +\delta k T, 
%
\end{align*}
where the last inequality is due to the regularity condition \eqref{regularity}. Then, for $\psi_n(t)$ it holds
\begin{equation*}
\psi_n(t)\leq  k_1 \, 2^{(\frac{c}{3}-1)n} + k_2 \delta T +k_3 \delta^2. 
%
\end{equation*}
Thus, if $c<3$ we obtain 
\begin{equation*}
\|u(t)-\tilde{u}(t)\|^2_{-\frac{1}{2}}=\lim_{n\rightarrow +\infty} \psi_n(t) \leq \underbrace{k_2 \delta T +k_3 \delta^2}_{\mbox{\footnotesize $=: f(\delta)$}} \, ,
\end{equation*}
which is arbitrary small for small $\delta$, as wanted.

Now let $u$ and $\tilde{u}$ be two  solutions of system \eqref{model} with the same noise $w$, but with different initial conditions $\underline{u}$ and $\underline{\tilde{u}}$, respectively. It is easy to verify that, if we perform the same computations as above in the case $w=\tilde{w}$, we get the following identity
\begin{align*}
\frac{d}{dt} \psi_n (t) &= \sum_{j=0}^n \frac{2}{2^j} \, z_j (t) \, \frac{d}{dt}z_j (t) \\
&= - \sum_{j=0}^n 2^{(c-1)j} \, z_j^2 (t) \, y_{j+1} (t) - 2^{(c-1)n} \, z_n (t) \, z_{n+1} (t) \, y_{n} (t).
\end{align*}
By integrating over the interval $[0,t]$, it yields 
\begin{equation}\label{12}
  \psi_n (t)-\psi_n (0)=- \! \int_0^t \! \sum_{j=0}^n 2^{(c-1)j} \, z_j^2 (s) y_{j+1} (s)ds - \! \int_0^t \! 2^{(c-1)n} \, z_n (s) z_{n+1} (s) y_{n} (s) ds.
\end{equation}
The first term on the right-hand side of \eqref{12} is strictly negative and decreasing as $n$ goes to infinity. From this and taking the limit in $n$, it immediately follows 
\begin{equation*}
\| u(t)-\tilde u(t) \|_{-\frac{1}{2}}<\| u(0)-\tilde u(0) \|_{-\frac{1}{2}}.
\end{equation*} 
\end{proof}
A consequence of the previous theorem is the uniqueness of the solution of~\eqref{model}.

\begin{corollary}[Uniqueness]\label{pathwise_uniqueness} Let $u$ and $\tilde{u}$ be two solutions of system~\eqref{model} with the same initial condition $\underline{u} \in H_+$ and the same noise $w \in C([0,T], \mathbb{R})$ with $w(0)=0$, then
\[
u(t)=\tilde{u}(t) \ \ \ \forall  t \in [0,T].
\]
\end{corollary}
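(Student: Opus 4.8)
The plan is to read off the corollary as a direct consequence of the Continuity Theorem (Theorem~\ref{thm:continuity}). The decisive observation is that two solutions sharing the same initial datum $\underline{u}$ and the same noise $w$ satisfy $\|\underline{u}-\underline{\tilde{u}}\|=0$ and $\|w-\tilde{w}\|_{\infty}=0$; hence, trivially, both quantities are smaller than any prescribed $\delta>0$, so the hypotheses of Theorem~\ref{thm:continuity} hold for every positive $\delta$.

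Carrying this out, I would first fix an arbitrary $\delta>0$ and apply Theorem~\ref{thm:continuity}, obtaining
\[
\sup_{t\in[0,T]}\|u(t)-\tilde{u}(t)\|_{-\frac{1}{2}}<f(\delta),
\]
with $f$ the modulus provided by the theorem, for which $\lim_{\delta\to 0}f(\delta)=0$. Since the left-hand side is independent of $\delta$, letting $\delta\to0$ forces $\sup_{t\in[0,T]}\|u(t)-\tilde{u}(t)\|_{-\frac{1}{2}}=0$. I would then convert this into the pointwise statement: because $\|z\|_{-\frac{1}{2}}^2=\sum_{j\geq0}2^{-j}z_j^2$ is a weighted $\ell^2$-norm with strictly positive weights, its vanishing at each $t$ forces $u_j(t)=\tilde{u}_j(t)$ for all $j$ and all $t\in[0,T]$, which is exactly the claimed uniqueness.

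There is essentially no obstacle here, since all the analytic difficulty has already been absorbed into the proof of Theorem~\ref{thm:continuity}; the argument is a one-line passage to the limit. The only points requiring a modicum of care are that $\|\cdot\|_{-\frac{1}{2}}$ must be recognized as a genuine norm (so that its vanishing yields equality in $\ell^2$ and not merely in a coarser topology), and that the corollary implicitly inherits the restriction $c\in[1,3)$ under which Theorem~\ref{thm:continuity} is established. As an alternative one could bypass the modulus $f$ altogether and argue directly from the energy identity \eqref{12} specialized to $\psi_n(0)=0$: the right-hand side is then nonpositive for every $n$, while $\psi_n(t)$ is manifestly nonnegative, so that in the limit $\|u(t)-\tilde{u}(t)\|_{-\frac{1}{2}}^2$ must vanish, giving the same conclusion.
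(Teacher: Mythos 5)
Your main argument is correct and is precisely the paper's own reasoning: the corollary is presented there as an immediate consequence of Theorem~\ref{thm:continuity}, obtained exactly as you do by noting the hypotheses hold for every $\delta>0$, letting $\delta\to0$, and using that $\|\cdot\|_{-\frac{1}{2}}$ is a genuine (weighted $\ell^2$) norm. One caveat on your closing aside: in identity \eqref{12} the boundary term $-\int_0^t 2^{(c-1)n}z_n(s)z_{n+1}(s)y_n(s)\,ds$ is not sign-definite at finite $n$ (since $z_nz_{n+1}$ can be negative), so the right-hand side is not ``nonpositive for every $n$''; to run that alternative one must let $n\to\infty$ and invoke the regularity estimate \eqref{regularity} (hence the restriction $c<3$) to show this term vanishes in the limit.
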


\section{Strong solutions}\label{sect:strong:sol}

In the first part of this section we prove that a strong solution for problem \eqref{model_stochastic} exists and is unique. 
We then tackle the question of stationary distributions. In this respect, we establish existence of a strong statistically stationary solution in $L^2 (\Omega, H^{\alpha} \cap H_+)$, for every $\alpha < \frac{c}{3}$, and further we show uniqueness in $L^2 (\Omega, H_+)$. All the results are valid in the range of $c \in [1,3)$.\\  
We start by giving the definition of strong solution.

\begin{definition}[Strong solution]  
 Let $\left( \Omega, \mathcal{F}, \{\mathcal{F}_t\}_{t \in [0,T]},\mathbb{P} \right)$ be a filtered probability space and $W$ a Brownian motion on $(\Omega, \mathcal{F},\mathbb{P})$, with respect to the filtration $\{\mathcal{F}_t\}_{t \in [0,T]} $. We say that $u$ is a \emph{strong solution} of system \eqref{model_stochastic} with $\mathcal{F}_0$-measurable initial condition   $\underline{u}$, if $u$ is a stochastic process on $(\Omega, \mathcal{F},\mathbb{P})$ with continuous sample paths, satisfying \eqref{model_stochastic} and adapted to the filtration generated by $W$ and $\underline{u}$. \\
 
Moreover, this solution is \emph{unique} if, given two solutions $u^{(1)}$ and $u^{(2)}$ with the same initial condition, it holds $u^{(1)}(t)=u^{(2)}(t)$ for all $t \in [0,T]$ a.s.. 
\end{definition}

Existence and uniqueness of a pathwise solution, together with continuity with respect to the noise and the initial data, will allow to readily get existence and uniqueness in the strong sense. \\
By Theorem~\ref{pathwise_exist}, given an $\mathcal{F}_0$-measurable positive initial condition $\underline{u}$ and the noise, we can construct an application 
\begin{equation}\label{application:F}
\begin{array}{lccc}
F_T: & \ell^2 \times C([0,T],\mathbb{R}) &\longrightarrow & C \left( [0,T],\ell^2 \right) \\
       & (\underline{u}(\omega), W(\cdot,\omega)) &\longmapsto &  F_T(\underline{u}(\omega), W(\cdot,\omega)), 
\end{array}
\end{equation}
that associates the pair $(\underline{u}(\omega), W(\cdot,\omega))$ with the corresponding solution
 of system~\eqref{model} on $[0,T]$. For every $T \in [0,+\infty)$, the function $F_T$ is well-defined thanks to Corollary~\ref{pathwise_uniqueness} and it is continuous with respect to the initial datum and the noise by Theorem~\ref{thm:continuity}.  
%
%
As a consequence, the map $u(\cdot,\omega) := F_T(\underline{u}(\omega), W(\cdot,\omega))$ from $\Omega$ to $C \left( [0,T],\ell^2 \right)$ is measurable with respect to the $\sigma$-field generated by $\underline{u}$ and the Brownian motion. Concluding, $u (\cdot, \omega)$ is a strong solution of \eqref{model_stochastic}.
Such a solution is unique by Corollary \ref{pathwise_uniqueness}. Hence, we obtain:

\begin{theorem}[Existence and uniqueness]\label{thm:strong:ex}
For every $T \in [0,+\infty)$, $\sigma \in \mathbb{R}_+$, $c \in [1,3)$ and initial condition $\underline{u}$, $\mathcal{F}_0$-measurable random variable with values in $H_+$, system \eqref{model_stochastic} admits a unique strong solution $u$. 
\end{theorem}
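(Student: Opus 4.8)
The plan is to transfer the deterministic, pathwise theory of Section~\ref{pathwise} to the stochastic system by means of the solution operator \eqref{application:F}. First I would observe that, for every fixed continuous path $w$ with $w(0)=0$ and every $\underline{u}\in H_+$, Theorem~\ref{pathwise_exist} furnishes a solution of \eqref{model} while Corollary~\ref{pathwise_uniqueness} guarantees it is the only one; hence the map $F_T(\underline{u},w)$ is unambiguously defined and takes values in $C([0,T],\ell^2)$. Theorem~\ref{thm:continuity} then shows that $F_T$ is continuous in both arguments with respect to the product topology of $\ell^2\times C([0,T],\mathbb{R})$, with a modulus of continuity $f$ that does not depend on the particular pair.

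Next I would define the candidate process pathwise by $u(\cdot,\omega):=F_T(\underline{u}(\omega),W(\cdot,\omega))$. Since almost every realization of $W$ lies in $C([0,T],\mathbb{R})$ with $W(0)=0$, and $\underline{u}(\omega)\in H_+$ by hypothesis, this is well defined for a.e.\ $\omega$, has continuous $\ell^2$-valued sample paths, and by construction each such path solves \eqref{model}; recalling the equivalence between \eqref{model} and \eqref{model_stochastic} for frozen noise, $u$ solves \eqref{model_stochastic}. It then remains to verify the measurability requirements in the definition of strong solution. Because $F_T$ is continuous it is Borel measurable, so $\omega\mapsto u(\cdot,\omega)$ is measurable with respect to $\sigma(\underline{u},W)$.

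To upgrade this to the required adaptedness to $\{\mathcal{F}_t\}_{t\in[0,T]}$, I would exploit the causal structure of the dynamics: applying Theorem~\ref{pathwise_exist} and Corollary~\ref{pathwise_uniqueness} on the subinterval $[0,t]$ shows that the value $F_T(\underline{u},w)(t)$ depends only on $\underline{u}$ and on the restriction $w|_{[0,t]}$. Consequently $u(t)$ is a continuous, hence Borel, function of $(\underline{u},W|_{[0,t]})$, and is therefore $\mathcal{F}_t$-measurable for every $t$. This causality step is the main point to check carefully, since the definition of strong solution demands genuine adaptedness rather than mere joint measurability; everything else is a direct transcription of the pathwise results.

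Finally, for uniqueness I would let $u^{(1)}$ and $u^{(2)}$ be two strong solutions sharing the same $\mathcal{F}_0$-measurable initial datum. For a.e.\ $\omega$ both trajectories are solutions of the deterministic system \eqref{model} driven by the same pair $(\underline{u}(\omega),W(\cdot,\omega))$, so Corollary~\ref{pathwise_uniqueness} forces $u^{(1)}(t,\omega)=u^{(2)}(t,\omega)$ for all $t\in[0,T]$; hence $u^{(1)}=u^{(2)}$ almost surely, as claimed.
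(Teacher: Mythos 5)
Your proposal is correct and follows essentially the same route as the paper: both construct the solution operator $F_T$ from Theorem~\ref{pathwise_exist} and Corollary~\ref{pathwise_uniqueness}, invoke Theorem~\ref{thm:continuity} for its continuity (hence measurability), and deduce uniqueness pathwise. In fact you are slightly more careful than the paper on the adaptedness point, making explicit the causality argument (that $u(t)$ depends only on $\underline{u}$ and $W|_{[0,t]}$, via the pathwise theory applied on $[0,t]$), which the paper asserts without elaboration.
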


With the above results in hand, we can now turn to the analysis of strong statistically stationary solutions.

\begin{definition}[Strong stationary solution]
We say that $u^*$  is  a \emph{strong stationary solution} of system \eqref{model_stochastic}, if it is a strong solution and the distribution of $u^*(t)$ does not depend on $t$, for all $t \in [0,T]$. 
\end{definition}

\begin{remark}
Weak existence of a statistically stationary solution $u^*$ for system \eqref{model_stochastic} is proved in  \cite{FrG-HVi}. Furthermore, by \cite[Prop. 3.1 and Thm. 4.2]{FrG-HVi}, $u^*$ can be chosen so that it belongs to $H_+$ for all times a.s..
\end{remark}

Existence and uniqueness of a strong solution imply, by Yamada and Watanabe theorem, uniqueness in law for solutions of system \eqref{model_stochastic}. This plus weak existence of a statistically stationary solution in $H_+$ (see previous Remark) gives strong existence. Moreover, from a moment estimate in \cite[Thm. 4.2]{FrG-HVi} we can infer that such a solution belongs to $L^2 (\Omega,H^{\alpha} \cap H_+)$, for every $\alpha < \frac{c}{3}$. Thus, the following theorem remains proved. 

\begin{theorem}[Existence of a  strong stationary solution] \label{thm:strong:stationary_ex}
For every $T \in [0,+\infty)$, $\sigma \in \mathbb{R}_+$ and $c \in [1,3)$, system \eqref{model_stochastic} admits a strong stationary solution $u^*$ such that $u^* \in L^2 \left(\Omega,H^{\alpha} \cap H_+ \right)$, for every $\alpha < \frac{c}{3}$.
\end{theorem}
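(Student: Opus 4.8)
The plan is to obtain the statement by combining three facts already at our disposal: the strong existence and pathwise uniqueness of Theorem~\ref{thm:strong:ex} (resting on Corollary~\ref{pathwise_uniqueness}), the Yamada--Watanabe theorem, and the weak stationary solution together with its moment bound provided by \cite{FrG-HVi}. No new dynamical estimate is needed; the work lies entirely in correctly transporting stationarity from the weak setting to the strong one.

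First I would fix the candidate stationary law. Let $u^*$ denote the weak (martingale) statistically stationary solution of \eqref{model_stochastic} furnished by \cite[Prop.~3.1 and Thm.~4.2]{FrG-HVi}, which can be chosen so that $u^*(t)\in H_+$ for all $t$ almost surely, and put $\mu^*:=\mathrm{Law}(u^*(t))$. By stationarity $\mu^*$ is independent of $t$, and since it is supported on $H_+$ it is an admissible $\mathcal{F}_0$-measurable initial law for the strong construction of this section.

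Next I would invoke uniqueness in law. Corollary~\ref{pathwise_uniqueness} provides pathwise uniqueness and \cite{FrG-HVi} provides a weak solution, so the Yamada--Watanabe theorem yields uniqueness in law for \eqref{model_stochastic}; moreover the strong solution is realized explicitly through the measurable map $F_T$ of \eqref{application:F}. I would then run this strong construction with an initial datum of law $\mu^*$. Uniqueness in law forces the resulting strong solution to have the same path-space law as the weak stationary solution $u^*$; in particular its one-time marginals all equal $\mu^*$, so it is itself stationary. This produces the desired strong stationary solution.

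Finally I would read off the integrability from the moment estimate $\mathbb{E}\big((u^*_j)^2\big)\le k\,2^{-\frac{2}{3}cj}$ of \cite[Thm.~4.2]{FrG-HVi}, which gives
\[
\mathbb{E}\big(\|u^*\|_\alpha^2\big)=\sum_{j=0}^{\infty}2^{2\alpha j}\,\mathbb{E}\big((u^*_j)^2\big)\le k\sum_{j=0}^{\infty}2^{\left(2\alpha-\frac{2}{3}c\right)j},
\]
a geometric series that converges exactly when $\alpha<\frac{c}{3}$; together with $u^*\in H_+$ almost surely this yields $u^*\in L^2(\Omega,H^\alpha\cap H_+)$ for every such $\alpha$. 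The one genuinely delicate point is the second step: one must ensure that uniqueness in law matches the \emph{full} path-space law of the strong solution with that of $u^*$, not merely the initial marginal, since it is precisely this identity of laws that transfers constancy of the one-time marginals and hence stationarity. This is exactly what the Yamada--Watanabe theorem delivers once pathwise uniqueness from Corollary~\ref{pathwise_uniqueness} is in place.
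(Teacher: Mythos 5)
Your proposal is correct and follows essentially the same route as the paper: pathwise uniqueness (Corollary~\ref{pathwise_uniqueness}) plus the weak stationary solution of \cite{FrG-HVi} combined via the Yamada--Watanabe theorem, and then the moment bound $\mathbb{E}\bigl((u^*_j)^2\bigr)\le k\,2^{-\frac{2}{3}cj}$ summed as a geometric series to get $u^*\in L^2(\Omega,H^{\alpha}\cap H_+)$ for $\alpha<\frac{c}{3}$. The only cosmetic difference is that the paper upgrades the weak stationary solution itself to a strong one, whereas you rebuild a strong solution via $F_T$ with initial law $\mu^*$ and transfer stationarity through uniqueness in law; these are equivalent executions of the same argument.
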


To conclude our analysis, we prove that only one strong statistically stationary solution may exist. 

\begin{theorem}[Uniqueness of stationary distribution] \label{thm:strong:stationary_uni}
For every $T \in [0,+\infty)$, $\sigma \in \mathbb{R}_+$ and $c \in [1,3)$, if $u_1^*, u_2^* \in L^2(\Omega, H_+)$ are strong stationary solutions of \eqref{model_stochastic}, then $u^*_1$ and $u^*_2$ have the same distribution. 
\end{theorem}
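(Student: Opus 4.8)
The plan is to reduce the statement to the equality of the two invariant one-time marginals and then run a synchronous-coupling argument against the Kantorovich characterisation of an optimal transport distance built from the contractive norm $\|\cdot\|_{-\frac12}$. First I would set $\mu_i:=\mathrm{Law}(u_i^*(0))$, $i=1,2$, which by stationarity coincides with $\mathrm{Law}(u_i^*(t))$ for every $t\in[0,T]$. Since both solutions obey the same dynamics \eqref{model_stochastic} and the strong solution is unique in law (Theorem~\ref{thm:strong:ex} combined with Yamada--Watanabe), it is enough to prove $\mu_1=\mu_2$; equality of the full path laws then follows. I would regard $\mu_1,\mu_2$ as Borel probability measures on the separable Hilbert, hence Polish, space $H^{-\frac12}$, into which $H_+\subset\ell^2$ embeds continuously because $\|\cdot\|_{-\frac12}\leq\|\cdot\|$. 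On this space I introduce the transport cost
\[
\mathcal{W}(\mu_1,\mu_2):=\inf_{\pi\in\Pi(\mu_1,\mu_2)}\int\|x-y\|_{-\frac12}\,d\pi(x,y),
\]
where $\Pi(\mu_1,\mu_2)$ is the set of probability measures on $H^{-\frac12}\times H^{-\frac12}$ with marginals $\mu_1$ and $\mu_2$. Because $u_i^*\in L^2(\Omega,H_+)$ gives $\mathbb{E}\|u_i^*(0)\|<\infty$, every cost is finite, and since the cost function is continuous and the space Polish an optimal coupling $\pi^*$ exists.

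Next I would realise $\pi^*$ dynamically. On an auxiliary probability space carrying a Brownian motion $W$ and an independent pair $(\underline{u},\underline{\tilde u})\sim\pi^*$, I set $u:=F_T(\underline{u},W)$ and $\tilde u:=F_T(\underline{\tilde u},W)$ using the same flow map from \eqref{application:F} and, crucially, the \emph{same} noise $W$. Each of $u,\tilde u$ is then a strong solution of \eqref{model_stochastic} whose initial law is the stationary law $\mu_1$, respectively $\mu_2$; hence, by stationarity together with uniqueness in law, $\mathrm{Law}(u(t))=\mu_1$ and $\mathrm{Law}(\tilde u(t))=\mu_2$ for every $t$. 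Consequently the pair $(u(t),\tilde u(t))$ is itself an element of $\Pi(\mu_1,\mu_2)$ for each fixed $t$.

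Finally I would invoke the pathwise contraction. For $\mathbb{P}$-almost every $\omega$ with $\underline{u}(\omega)\neq\underline{\tilde u}(\omega)$, estimate \eqref{contrazione}, applied to the two solutions driven by the common noise $W(\cdot,\omega)$, gives at a fixed $t>0$ the strict bound $\|u(t)-\tilde u(t)\|_{-\frac12}<\|\underline{u}-\underline{\tilde u}\|_{-\frac12}$, whereas on $\{\underline{u}=\underline{\tilde u}\}$ pathwise uniqueness (Corollary~\ref{pathwise_uniqueness}) forces both sides to vanish. If $\mu_1\neq\mu_2$, then no coupling, in particular not $\pi^*$, can be concentrated on the diagonal, so $\mathbb{P}(\underline{u}\neq\underline{\tilde u})>0$; integrating the almost sure inequality $\|u(t)-\tilde u(t)\|_{-\frac12}\leq\|\underline{u}-\underline{\tilde u}\|_{-\frac12}$, which is strict on a set of positive probability, yields
\[
\int\|x-y\|_{-\frac12}\,d\,\mathrm{Law}(u(t),\tilde u(t))=\mathbb{E}\|u(t)-\tilde u(t)\|_{-\frac12}<\mathbb{E}\|\underline{u}-\underline{\tilde u}\|_{-\frac12}=\mathcal{W}(\mu_1,\mu_2).
\]
Since $(u(t),\tilde u(t))\in\Pi(\mu_1,\mu_2)$, the left-hand side is at least $\mathcal{W}(\mu_1,\mu_2)$, contradicting its minimality. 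Hence $\mu_1=\mu_2$, and uniqueness in law upgrades this to equality of the distributions of $u_1^*$ and $u_2^*$.

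The main obstacle I anticipate is less a single estimate than the careful assembly of the probabilistic and deterministic ingredients: one must build both solutions on a common space driven by the same Brownian motion yet with initial data coupled through $\pi^*$, and then certify that their time-$t$ marginals remain $\mu_1$ and $\mu_2$, which is precisely where uniqueness in law is indispensable. The deeper conceptual point is that \eqref{contrazione} provides only a \emph{strict} but non-quantitative decay, with no uniform contraction rate, so merely iterating a single coupling need not converge to a fixed point; the device that sidesteps this is to pair the strict contraction with stationarity in a single step and let Kantorovich minimality convert the almost sure strict inequality into a contradiction. Guaranteeing that this strictness survives integration — i.e. that distinct marginals compel $\pi^*$ to charge the off-diagonal — is the exact hinge of the argument and what closes the proof.
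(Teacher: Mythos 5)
Your proposal is correct and follows essentially the same route as the paper: an optimal Kantorovich coupling of the two stationary laws, realised dynamically with a single shared Brownian motion, is shown to contradict the strict pathwise contraction \eqref{contrazione}. The only differences are cosmetic --- the paper minimises the squared cost $\|x-y\|^2_{-\frac{1}{2}}$ rather than $\|x-y\|_{-\frac{1}{2}}$, and your explicit verification that distinct marginals force the optimal plan to charge the off-diagonal (so that strictness survives integration) spells out a point the paper's proof leaves implicit.
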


The proof relies on the Kantorovich's formulation for the optimal transport problem. We will introduce a distance between probability measures as a cost function to be minimized. Then, we will use formula \eqref{contrazione} to show that the existence of two different stationary distributions would contradict the minimality achieved by the optimal transport plan.

\begin{proof}
By contradiction, let $\mu_1$ and $\mu_2$ be two different stationary distributions  in $H_+$ giving rise to strong stationary solutions of \eqref{model_stochastic}. We can define the $2$-plans with marginals $\mu_1$ and $\mu_2$ as 
$$
\Gamma(\mu_1,\mu_2):=\left\{\eta \, \in \, \mathcal{M}_1(\ell^2\times \ell^2)  : \int_{\ell^2}\eta(x,dy)=\mu_1 \mbox{ and } \int_{\ell^2}\eta(dx,y)=\mu_2 \right\}.
$$
Moreover, for every $\eta \in \Gamma(\mu_1, \mu_2)$, we introduce the functional 
$$
\varphi (\eta):= \int_{\ell^2\times \ell^2} \| x-y\|^2_{-\frac{1}{2}}\, \eta(dx,dy).
$$
%
%
By Kantorovich's result on optimal transport problem, we know there exists an element $\eta_0$ in $\Gamma(\mu_1,\mu_2)$, such that 
\begin{equation}\label{Kantorovich}
\varphi (\eta_0) \leq \varphi (\eta),  \ \forall  \eta \in \Gamma(\mu_1,\mu_2).
\end{equation}
On $\tilde{\Omega}:=\ell^2\times \ell^2\times \Omega$, where  $\Omega$ is the sample space of Brownian motion, we construct the random vector $(u^*_1(0), u^*_2(0))$ with joint law $\eta_0$, the probability measure that realizes the minimum \eqref{Kantorovich}.
Let  $u^*_1$ (resp.  $u^*_2$ ) be the strong stationary solution with initial condition $u^*_1(0)\sim \mu_1$ (resp. $u^*_2(0)\sim \mu_2$). After a time $t>0$, the random vector $(u^*_1(t), u^*_2(t))$ will have joint law $\eta_t$ that, by stationarity, is still belonging to $\Gamma(\mu_1,\mu_2)$. Therefore, from \eqref{Kantorovich}, we get  
\begin{equation}\label{contraddizione}
  \hspace{0.5cm}\begin{array}{ccccc}
\varphi (\eta_0) && \leq &&   \varphi (\eta_t)\\
\verteq &&&&\verteq \\
\displaystyle{\int_{\tilde{\Omega}}\|u^*_1(0)-u^*_2(0)\|^2_{-\frac{1}{2}}d\tilde{\mathbb{P}}(\tilde{\omega}) }&&& &\displaystyle{\int_{\tilde{\Omega}}\|u^*_1(t)-u^*_2(t)\|^2_{-\frac{1}{2}}d\tilde{\mathbb{P}}(\tilde{\omega})}
\end{array}
\end{equation}
with $\tilde{\mathbb{P}}$ probability measure on $\tilde{\Omega}$. On the other hand, by taking expectation on both sides of \eqref{contrazione}, it yields 
\begin{equation*}
\displaystyle{\int_{\tilde{\Omega}}\|u^*_1(0)-u^*_2(0)\|^2_{-\frac{1}{2}}d\tilde{\mathbb{P}}(\tilde{\omega}) }\quad > \quad \displaystyle{\int_{\tilde{\Omega}}\|u^*_1(t)-u^*_2(t)\|^2_{-\frac{1}{2}}d\tilde{\mathbb{P}}(\tilde{\omega})},
\end{equation*}
that contradicts \eqref{contraddizione}.
\end{proof}

\section*{Acknowledgments}

The authors wish to warmly  thank Markus Fischer for valuable discussions and suggestions. DB has been partially supported  by the University of Padova  through the Project ``Stochastic Processes and Applications to Complex  Systems'' (CPDA123182). FC acknowledges financial support of FIRB research grant RBFR10N90W. MF has been partially supported by GA\v{C}R grant P201/12/2613. LP acknowledges financial support of the research project  ``Singular perturbation problems for differential operators'', Progetto di Ateneo of the University of Padova.

\bibliographystyle{abbrv}


\end{document}